\definecolor{Black}{rgb}{0,0,0}
\definecolor{CiteColor}{rgb}{0.1,0.8,0.1}
\begin{document}
\title{Global Strong Solution for Large Data\\ to the Hyperbolic Navier-Stokes Equation} 

  \author{Alexander Schöwe}
     \date{27.09.2014}
  \maketitle

\selectlanguage{english}
\begin{abstract}
\noindent We consider a hyperbolic quasilinear fluid model, that arises from a delayed version for the constitutive law for the deformation tensor in the incompressible Navier-Stokes equation. We prove the existence of global strong solutions for large data including decay rates in $\R^2$ and in the three dimensional special cases known from the classical Navier-Stokes equation. As a corollary we can derive from \cite{meinpaper} a global relaxation limit $\tau\to 0$ uniform in time. Furthermore we give an improved version of the regularity criterion of \cite{fanozawa}.
\end{abstract}

\section{Introduction}
Let $n\geq 2$ and $T,\tau,\mu>0$. In this note the fluid model
\begin{equation}\label{dgl}
  	\begin{aligned}
   \tau u_{tt}-\mu\Delta u+u_{t}+\nabla p+\tau\nabla p_{t}&=-(u\cdot\nabla)u-(\tau u_{t}\cdot\nabla)u-(\tau u\cdot\nabla)u_{t}\\
	\operatorname{div} u &= 0\\
	u(0,\cdot)=u_{0},\quad &u_{t}(0,\cdot)=u_{1}
	\end{aligned}
 \qquad
	\begin{aligned}
		\text{in } &(0,T)\times \R^{n} ,\\ 	\text{in } &(0,T)\times \R^{n},\\
		&\text{in } \R^{n}
	\end{aligned}
\end{equation}
for the velocity field $u=u(t,x):(0,T)\times\R^{n}\to \R^{n}$ and the pressure $p=p(t,x):(0,T)\times\R^{n}\to \R$, where $u_{0}$ and $u_{1}$ are given initial data, will be considered. This model arises from a delayed version for the constitutive law for the deformation tensor in the incompressible Navier-Stokes equation (compare \cite{lokal}).\\
The model has already been studied in \cite{lokal}, \cite{global} and \cite{meinpaper}, where the global existence for small data and a relaxation limit $\tau\to 0$  local in time for \eqref{dgl} was proven. In contrast to the Navier-Stokes equation the global well-posedness for large data even in $\R^2$ was still open. We will prove the global existence for large data and small parameter $\tau$ with the following idea. First we show that the local solution to the hyperbolic Navier-Stokes equation in $\R^2$ decays for a small time, since for small $\tau$ it stays close to the solution of the classical Navier-Stokes equation (due to \cite{meinpaper}), for which the decay is well known. With some work we are than able to show that the smallness condition for a modified version of the global existence theorem \cite[Theorem 6.1]{global} is fullfilled and obtain a global solution including decay rates.\\
This basic idea was already used in \cite{hagstromlorenz} for hyperbolic-parabolic coupled systems, but their theorem does not apply to our case. We will comment on other relating results and discuss other approaches to the problem in subsection 3.2, especially we improve the regularity criterion of \cite{fanozawa}.\\
Since our global existence theorem will be independent of $\tau\leq \tau_0$ we have an important consequence for the relaxation limit. We can modify the local result from \cite{meinpaper} and use the $\tau$-independent decay rates to get a global relaxation limit $\tau\to 0$ uniform in time. Therefore the classical Navier-Stokes equation and the hyperbolic Navier-Stokes equation behave similar. Formally, regarding the derivation of the hyperbolic Navier-Stokes equation, the result on the approximation seems to be not astonishing but one has to keep in mind the works \cite{dqr} and \cite{fsr}, where it was shown that delayed systems, that are formally close together, can behave differently. For example in \cite{fsr} it was shown, that an equation, coming from a Cattaneo type law, might not be exponentially stable, although the same system with a Fourier type law is.  In \cite{dqr} it is even shown, that formal high taylor expansions of the delayed term, can lead to ill-posedness. \\

\noindent The paper is a short overview, where details will be published later and is organized as follows. In section two we modify the global existence theorem for small data due to \cite{global} to get a $\tau$-independent smallness condition, which is of course necessary for the indicated proof for large data. In section three we first give some a priori estimates to handle all norms appearing in the smallness condition and then give a sketch of the proof for the global existence theorem for large data and small parameter $\tau$. We conclude this section with some remarks on the conditions on $\tau$ and relating results. Furthermore we improve the regularity criterion of \cite{fanozawa}. In the last section we will prove a global relaxation limit $\tau\to 0$ uniform in time. 

\section{Global Existence Theorem with $\tau$-independent smallness condition}
The global existence for small data in \cite{global} was proven with a method by Klainerman and Ponce, like it is for example described in \cite{rackebuch}. The proof uses convergence rates coming from the damped wave equation to show an a priori estimate for the solution to \eqref{dgl} that finally makes it possible to reiterate the application of the local existence theorem to obtain a global solution. Since we want to show a $\tau$-independent theorem, we first of all have to consider the $\tau-$dependence of the convergence rates for the damped wave equation. With a transformation to the $\tau$-dependent problem one can derive from \cite{matsumura} the following result that corresponds to \cite[Lemma 5.1]{global}, but in contrast to that is independent of $\tau$.

\begin{lemma}\label{taugedwelle}
Let $v$ be the solution to 
\begin{equation*}
  	\begin{aligned}
  	\tau &v_{tt}-\mu\Delta v+v_{t}=0\\
		&v(0,\cdot)=0,\quad v_{t}(0,\cdot)=v_{1}
	\end{aligned}
  \qquad
	\begin{aligned}
		\text{in } &(0,\infty)\times \R^{n},\\
		&\text{in } \R^{n},
	\end{aligned}
\end{equation*}
with $n\in\{2,3\}$ and $\tau\leq 1$. For $\alpha \in \N^{n}_{0}$, $j\in\{0,1\}$ and $0\leq \varepsilon\leq \frac{1}{2}$  there exists a constant $c$ independent of $\tau$, such that the following estimates hold:
\begin{align}\label{abfallratenfürglobalenexistenzsatzfürkleinedaten2}
	\|\nabla^{\alpha}\partial^{j}_{t}v(t,\cdot)\|_{2}\leq c\tau^{1-j}(1+t)^{-(\frac{|\alpha|}{2}+j)} \|v_{1}\|_{X_2},
\end{align}
\begin{align}\label{abfallratenfürglobalenexistenzsatzfürkleinedaten1}
	\|\nabla^{\alpha}\partial^{j}_{t}v(t,\cdot)\|_{2}\leq c\tau^{1-j}(1+t)^{-(\frac{n}{4}+\frac{|\alpha|}{2}+j)} \|v_{1}\|_{X_1},
\end{align}
\begin{align}\label{abfallratenfürglobalenexistenzsatzfürkleinedatenepsilon}
	\|\nabla^{\alpha}\partial^{j}_{t}v(t,\cdot)\|_{2}\leq c\tau^{1-j}(1+t)^{-\frac{n}{4}+\frac{n}{2}\varepsilon-\frac{|\alpha|}{2}-j} \|v_{1}\|_{X_{\frac{1}{1-\varepsilon}}},
\end{align}
where
	\[X_{k}:=\begin{cases}

  L^{2}\cap L^{k},  & \text{for }|\alpha|+j=0\\
  W^{|\alpha|,2}\cap L^{k}, & \text{for  }|\alpha|+j\geq1.
\end{cases}
\]
Furthermore it holds for $1\leq p \leq 2 \leq q \leq \infty$ with $\frac{1}{p}+\frac{1}{q}=1$ and $\delta>0$ 
\begin{align}\label{abfallratenfürglobalenexistenzsatzfürkleinedatenq}
	\|\nabla^{\alpha}\partial^{j}_{t}v(t,\cdot)\|_{q}\leq c\tau^{1-j}(1+t)^{-\bigl(\frac{n}{2}(1-\frac{2}{q})+\frac{|\alpha|}{2}+j\bigr)} \|v_{1})\|_{Y_q},
\end{align}
where
\[Y_q:=W^{m_q,p}\]
and 
\[m_q:=\lceil(1-\tfrac{2}{q})(2+n)+\delta\rceil+|\alpha| \equiv m_0 +|\alpha|. \]
\end{lemma}
\noindent Changing the energy to 
\begin{equation}\label{energie}
	E_{m}(t):=E_{m}\left(u(t)\right):=\frac{1}{2} \sum_{|\alpha|\leq m+1}(\tau^2 \|\nabla^{\alpha}u_{t}\|^{2}_{2}+\tau\mu\|\nabla^{\alpha}\nabla u\|^{2}_{2}+\varepsilon_2\|\nabla^{\alpha}u\|^{2}_{2})(t),
\end{equation}
for a $0< \varepsilon_2<\frac{1}{2} $, one can show the following $\tau$-independent high energy estimate corresponding to \cite[Theorem 4.1]{global}.
\begin{theorem}\label{highenergyestimate}
For $\tau\leq 1$ there exists a constant $c$, independent of the local existence time $T$, the data $(u_{0},u_{1})\in \left(W^{m+2,2}\cap L^{2}_{\sigma}\right)\times \left(W^{m+1,2}\cap L^{2}_{\sigma}\right)$ and $\tau$, such that for $0\leq t \leq T$ and $m>\tfrac{n}{2}+1$
\begin{equation}
	E_{m}(t)\leq c E_{m}(0)e^{c\int^{t}_{0}(\|u\|^{2}_{\infty}+\tau\|u_{t}\|_{1,\infty}+\|\nabla u\|_{\infty}+\tau^2\|u_{t}\|_{1,\infty}^2+\|\nabla u\|_{\infty}^2)(r)\mathrm d r}.
\end{equation}
\end{theorem}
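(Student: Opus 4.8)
The plan is to adapt the Klainerman--Ponce energy method to the hyperbolic, $\tau$-weighted setting, following the scheme of \cite[Theorem 4.1]{global} but tracking every power of $\tau$ so that the resulting constant is uniform. First I would apply $\nabla^\alpha$ to \eqref{dgl} for each multi-index $|\alpha|\le m+1$, test the differentiated equation with the combination $\tau\nabla^\alpha u_t+\varepsilon_2\nabla^\alpha u$ in $L^2$, and sum over $\alpha$. The linear part reproduces, up to the cross term $\tau\varepsilon_2(\nabla^\alpha u_t,\nabla^\alpha u)$, exactly the energy \eqref{energie}: testing with $\tau\nabla^\alpha u_t$ yields $\tfrac{d}{dt}\bigl(\tfrac{\tau^2}{2}\|\nabla^\alpha u_t\|_2^2+\tfrac{\tau\mu}{2}\|\nabla^{\alpha+1}u\|_2^2\bigr)$ plus the dissipation $\tau\|\nabla^\alpha u_t\|_2^2$, while testing with $\varepsilon_2\nabla^\alpha u$ produces $\tfrac{\varepsilon_2}{2}\tfrac{d}{dt}\|\nabla^\alpha u\|_2^2$, the cross term, and the further contributions $\mu\varepsilon_2\|\nabla^{\alpha+1}u\|_2^2-\tau\varepsilon_2\|\nabla^\alpha u_t\|_2^2$. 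For $\varepsilon_2<\tfrac12$ and $\tau\le1$ the cross term is controlled by Young's inequality, so that the modified energy $\widetilde E_m:=E_m+\tau\varepsilon_2\sum_{|\alpha|\le m+1}(\nabla^\alpha u_t,\nabla^\alpha u)$ is equivalent to $E_m$ with $\tau$-independent constants, and the remaining dissipation $\tau(1-\varepsilon_2)\|\nabla^\alpha u_t\|_2^2+\mu\varepsilon_2\|\nabla^{\alpha+1}u\|_2^2$ is nonnegative; I would keep it rather than discard it, as it is the key to the $\tau$-uniformity.

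Next I would dispose of the pressure. Since $\operatorname{div}u=0$ forces $\operatorname{div}u_t=0$, integration by parts gives $(\nabla\nabla^\alpha p,\nabla^\alpha w)=-(\nabla^\alpha p,\nabla^\alpha\operatorname{div}w)=0$ for $w\in\{u,u_t\}$, and likewise for the $\tau\nabla p_t$ term; hence all pressure contributions vanish identically and no elliptic pressure estimate is needed. The heart of the argument is then the nonlinear right-hand side $-(u\cdot\nabla)u-\tau(u_t\cdot\nabla)u-\tau(u\cdot\nabla)u_t$. I would split each term, by a Kato--Ponce type commutator, into a transport part $(a\cdot\nabla)\nabla^\alpha b$ and a commutator $[\nabla^\alpha,a\cdot\nabla]b$. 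The transport parts are treated by the divergence-free structure: $((a\cdot\nabla)\nabla^\alpha b,\nabla^\alpha b)=0$ for divergence-free $a$, which kills $(u\cdot\nabla)\nabla^\alpha u$ against $\varepsilon_2\nabla^\alpha u$ and $\tau^2(u\cdot\nabla)\nabla^\alpha u_t$ against $\nabla^\alpha u_t$, while the commutators are estimated through $\|[\nabla^\alpha,a\cdot\nabla]b\|_2\lesssim\|\nabla a\|_\infty\|\nabla^{|\alpha|}b\|_2+\|\nabla^{|\alpha|}a\|_2\|\nabla b\|_\infty$ together with the algebra property of $W^{m,2}$ for $m>\tfrac n2+1$.

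The delicate point, and the step I expect to be the main obstacle, is the bookkeeping of the powers of $\tau$ in the surviving leading-order couplings, for example $-\tau\bigl((u\cdot\nabla)\nabla^\alpha u,\nabla^\alpha u_t\bigr)$ coming from $(u\cdot\nabla)u$ tested against $\tau\nabla^\alpha u_t$. A naive Cauchy--Schwarz bound here costs a factor $\tau^{-1/2}$ and destroys uniformity; instead I would estimate it by Young's inequality \emph{against the kept dissipation}, $\tau\|u\|_\infty\|\nabla^{\alpha+1}u\|_2\|\nabla^\alpha u_t\|_2\le\tfrac{\tau(1-\varepsilon_2)}{4}\|\nabla^\alpha u_t\|_2^2+\tfrac{C}{\mu}\|u\|_\infty^2\,\tau\mu\|\nabla^{\alpha+1}u\|_2^2$, so that the first summand is absorbed by the dissipation and the second, since $\tau\mu\|\nabla^{\alpha+1}u\|_2^2\le 2E_m$, is bounded by $C\|u\|_\infty^2 E_m$ with a $\tau$-independent constant. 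This is precisely the mechanism that forces the \emph{squared} norms $\|u\|_\infty^2$ and $\tau^2\|u_t\|_{1,\infty}^2$ into the exponent, whereas the commutator contributions, in which one factor $\tau\nabla^\alpha u_t$ or $\nabla^{\alpha+1}u$ is already balanced by the energy, produce the linear terms $\|\nabla u\|_\infty$ and $\tau\|u_t\|_{1,\infty}$. Carrying out the same Young-against-dissipation splitting for the remaining couplings $\tau^2\bigl((u_t\cdot\nabla)\nabla^\alpha u,\nabla^\alpha u_t\bigr)$ and $\tau\varepsilon_2\bigl((u\cdot\nabla)\nabla^\alpha u,\nabla^\alpha u_t\bigr)$ (the latter arising from $(u\cdot\nabla)u_t$ after an integration by parts) yields exactly the five coefficients in the statement. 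Summing over $|\alpha|\le m+1$ gives $\tfrac{d}{dt}\widetilde E_m\le c\,g(t)\,\widetilde E_m$ with $g=\|u\|_\infty^2+\tau\|u_t\|_{1,\infty}+\|\nabla u\|_\infty+\tau^2\|u_t\|_{1,\infty}^2+\|\nabla u\|_\infty^2$, and Gronwall's inequality together with the equivalence $\widetilde E_m\sim E_m$ finishes the proof.
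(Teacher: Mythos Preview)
Your proposal is correct and follows exactly the approach the paper indicates: the paper does not give a detailed proof of this theorem but simply states that, after replacing the energy by the $\tau$-weighted form \eqref{energie}, one can reproduce the high-energy estimate of \cite[Theorem~4.1]{global} with $\tau$-independent constants. Your argument is precisely this adaptation --- testing the $\nabla^\alpha$-differentiated equation with $\tau\nabla^\alpha u_t+\varepsilon_2\nabla^\alpha u$, exploiting the transport cancellations and Kato--Ponce commutators, and using Young's inequality against the retained dissipation $\tau(1-\varepsilon_2)\|\nabla^\alpha u_t\|_2^2+\mu\varepsilon_2\|\nabla^{\alpha+1}u\|_2^2$ to avoid the $\tau^{-1/2}$ loss --- and it correctly produces the five coefficients in the exponent; the squared terms $\|u\|_\infty^2$ and $\tau^2\|u_t\|_{1,\infty}^2$ arise exactly from the absorption step, while the linear ones come from the commutators, as you say.
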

\noindent Using the representation formula for the solution \cite[Theorem 4.1]{global}, Theorem \ref{highenergyestimate} and Lemma \ref{taugedwelle} one can prove the following $\tau$-independent version of \cite[Theorem 5.3]{global}.
\begin{theorem}\label{spezielleapriori}
Let $4<q<\infty$, $m_0$ from Lemma \ref{taugedwelle}, $m_,m_1\in \N$ with $m_1\geq 2$, $m\geq m_1+m_0+3$, $p:=\tfrac{q}{q-1}$, $0 \leq \varepsilon \leq \tfrac{1}{q}$ and $\tau\leq\tfrac{1}{2}$. There exists a $\delta_{1}>0$ independent of $\tau$ such that for initial data $(u_{0},u_{1})$ with
\begin{equation}\label{anfangsdelta}
	\|u_{0}\|_{m+4,2}+\tau\|u_{1}\|_{m+3,2}+\|u_{0}\|_{\frac{1}{1-\varepsilon}}+\tau\|u_{1}\|_{\frac{1}{1-\varepsilon}}+\|u_{0}\|_{m_1+m_0+2,p}+\tau\|u_{1}\|_{m_1+m_0+1,p}<\delta_{1},
\end{equation}
there exists a $M_{0}>0$ independent of $T$ and $\tau$, such that the solution $u$ satisfies
\begin{align}\label{MT}
\nonumber	M(T):=\sup_{0 \leq t \leq T}\Bigl\{(&1+t)^{1-\frac{2}{q}}\|u(t)\|_{m_1,q}+(1+t)^{\frac{3}{2}-\frac{2}{q}}(\tau\|u_{t}(t)\|_{m_1,q}+\|\nabla u(t)\|_{m_1,q})\\
&+(1+t)^{\frac{1}{2}-\varepsilon}\|u(t)\|_{m,2}+(1+t)^{1-\varepsilon}(\tau\|u_{t}(t)\|_{m,2}+\|\nabla u(t)\|_{m,2})\Bigr\}\leq M_{0}
\end{align}
\end{theorem}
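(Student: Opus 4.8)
The plan is to set up a continuity (bootstrap) argument on the quantity $M(T)$ defined in \eqref{MT}. The strategy, following the Klainerman--Ponce scheme alluded to in the text, is to show that under the smallness condition \eqref{anfangsdelta} one has an a priori estimate of the form $M(T)\leq C\bigl(\delta_1+M(T)^2\bigr)$, uniformly in $T$ and in $\tau\leq\tfrac12$. Since $M$ is continuous in $T$ and $M(0)$ is controlled by $\delta_1$, such a quadratic inequality with $\tau$-independent constant $C$ traps $M(T)$ below a fixed bound $M_0$ for all time, provided $\delta_1$ is chosen small enough (also independent of $\tau$). The whole point, and the reason the earlier $\tau$-uniform lemmas were prepared, is that every constant entering this inequality must be shown free of $\tau$.

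First I would write the solution $u$ via the representation formula from \cite[Theorem 4.1]{global}: the linear evolution of the damped-wave semigroup applied to the data $(u_0,u_1)$, plus a Duhamel term integrating the semigroup against the nonlinearity $F:=-(u\cdot\nabla)u-(\tau u_t\cdot\nabla)u-(\tau u\cdot\nabla)u_t$ (after projecting onto divergence-free fields to eliminate the pressure). To each norm appearing in the definition of $M(T)$ — the $L^q$-type norms weighted by $(1+t)^{1-2/q}$ and $(1+t)^{3/2-2/q}$, and the $L^2$-type norms weighted by $(1+t)^{1/2-\varepsilon}$ and $(1+t)^{1-\varepsilon}$ — I would apply the corresponding decay estimate from Lemma \ref{taugedwelle}: the $L^q$ estimate \eqref{abfallratenf�rglobalenexistenzsatzf�rkleinedatenq} for the $q$-norms and the $L^2$ estimates \eqref{abfallratenf�rglobalenexistenzsatzf�rkleinedaten1} and \eqref{abfallratenf�rglobalenexistenzsatzf�rkleinedatenepsilon} for the $L^2$-norms. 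The $\tau^{1-j}$ factor in these estimates is exactly what absorbs the explicit $\tau$-weights sitting in front of $u_t$ in \eqref{MT} and in the data norm \eqref{anfangsdelta}, keeping the bookkeeping $\tau$-uniform. The linear part is then bounded directly by the data quantity in \eqref{anfangsdelta}, matching the chosen time-weights against the decay exponents $\tfrac{n}{2}(1-\tfrac2q)$ and $\tfrac{n}{4}-\tfrac{n}{2}\varepsilon$ coming from $n=2$.

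The core of the work is estimating the Duhamel integral. I would bound the nonlinearity $F(r)$ in the source norm $Y_q=W^{m_q,p}$ (and in the $L^2$-based source spaces) by products of the weighted norms making up $M(r)$; this is where the Sobolev index budget $m\geq m_1+m_0+3$ and the algebra/Moser product estimates for $W^{k,p}$ enter, and where the high-energy control from Theorem \ref{highenergyestimate} supplies the top-order $L^2$ derivatives that the $L^q$ product estimates cannot close on their own. Each quadratic (and the $\tau$-weighted cubic-looking) term in $F$ produces a factor $M(r)^2$ times a product of time-weights $(1+r)^{-a}$; after multiplying by the semigroup decay $(1+t-r)^{-b}$ from Lemma \ref{taugedwelle} and the outer weight $(1+t)^{c}$, the integral $\int_0^t (1+t)^{c}(1+t-r)^{-b}(1+r)^{-a}\,\dd r$ must be shown bounded uniformly in $t$. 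This reduces to the standard convolution-of-decay lemma, which requires the arithmetic of the exponents to satisfy $b>1$, $a>1$, and $c\le a+b-1$ (or the borderline variants, handled by the $\varepsilon$-room in \eqref{abfallratenf�rglobalenexistenzsatzf�rkleinedatenepsilon} and by the freedom $0\le\varepsilon\le\tfrac1q$).

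The main obstacle I expect is precisely verifying this exponent arithmetic in the critical two-dimensional case: in $\R^2$ the linear decay rates are slow, so the available time-decay is tight and several of the integrals sit at or near the borderline where they would only give a logarithmic $\log(1+t)$ loss rather than a uniform bound. Closing these borderline cases is exactly what forces the particular choice of the weights $1-\tfrac2q$, $\tfrac32-\tfrac2q$, $\tfrac12-\varepsilon$, $1-\varepsilon$ in \eqref{MT}, the constraint $4<q<\infty$, and the small parameter $\varepsilon\le\tfrac1q$, which buys the extra fractional decay $\tfrac{n}{2}\varepsilon$ in \eqref{abfallratenf�rglobalenexistenzsatzf�rkleinedatenepsilon} needed to push the critical integrals strictly below the divergence threshold. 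Once the quadratic estimate $M(T)\le C(\delta_1+M(T)^2)$ is established with $\tau$-independent $C$, the bootstrap closes and yields the bound \eqref{MT} with $M_0$ independent of both $T$ and $\tau$.
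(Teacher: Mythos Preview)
Your proposal is correct and follows essentially the same approach the paper indicates: the paper's proof is the single sentence preceding the theorem, which names exactly the three ingredients you use --- the representation formula from \cite[Theorem 4.1]{global}, the $\tau$-uniform linear decay estimates of Lemma \ref{taugedwelle}, and the high-energy estimate Theorem \ref{highenergyestimate} --- assembled into the Klainerman--Ponce bootstrap $M(T)\le C(\delta_1+M(T)^2)$ that you spell out. Your discussion of the $\tau^{1-j}$ factors matching the $\tau$-weights in \eqref{MT}, of the role of $\varepsilon$ in \eqref{abfallratenf�rglobalenexistenzsatzf�rkleinedatenepsilon} to avoid the borderline $L^1$ source norm, and of the exponent arithmetic in the Duhamel convolution is precisely the content the paper defers to ``details will be published later.''
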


\begin{remark}\
\vspace{-15pt}
 \begin{enumerate}
  \item It is not enough to just check the estimates in \cite{global} on their $\tau$-dependence. Therefore we put for example an artifical $\tau$ in the definition of the energy and $M(T)$.
\item The introduction of $\varepsilon$ is necessary for the proof of the global existence theorem for large data, since we can not show an a prioi estimate for the $L^1$-norm.
 \end{enumerate}
\end{remark}
\noindent Now one can derive a global existence theorem with a smallness condition and decay rates independent of $\tau$ as usual.
\begin{theorem}\label{globalelösungdeltatauunabhängig}
 Let $4<q<\infty$, $m_,m_1\in \N$ with $m_1\geq 2$, $m\geq m_1+5+n$, $p:=\tfrac{q}{q-1}$, $0 \leq \varepsilon \leq \tfrac{1}{q}$ and $\tau\leq 1$. There exists a $\delta>0$ independent of $\tau$ such that for initial data $(u_{0},u_{1})$ with
\begin{equation}
	\|u_{0}\|_{m+4,2}+\tau\|u_{1}\|_{m+3,2}+\|u_{0}\|_{\frac{1}{1-\varepsilon}}+\tau\|u_{1}\|_{\frac{1}{1-\varepsilon}}+\|u_{0}\|_{m_1+n+4,p}+\tau\|u_{1}\|_{m_1+n+3,p}<\delta,
\end{equation}
there exists a unique global solution $(u,p)$ to \eqref{dgl} satisfying
\begin{align}
	u\in C^{0}([0,\infty)&,W^{m+4,2}\cap L^{2}_{\sigma})\cap C^{1}([0,\infty),W^{m+3,2})\cap C^{2}([0,\infty),W^{m+2,2}),\\
&\nabla (p+\tau p_{t})\in C^{0}([0,\infty),W^{m+2,2}).
\end{align}
Furthermore it holds 
\begin{align*}
	\|u(t)\|_{m,2}=\mathcal{O}\left(t^{-(\frac{1}{2}-\varepsilon)}\right),\quad \tau\|u_{t}(t)\|_{m,2}=\mathcal{O}\left(t^{-(1-\varepsilon)}\right),\quad
\|\nabla u(t)\|_{m,2}&=\mathcal{O}\left(t^{-(1-\varepsilon)}\right)\\
	\|u(t)\|_{m_1,q}=\mathcal{O}\left(t^{-(1-\frac{2}{q})}\right),\;\, \tau\|u_{t}(t)\|_{m_1,q}=\mathcal{O}\left(t^{-(\frac{3}{2}-\frac{2}{q})}\right),\;\,
\|\nabla u(t)\|_{m_1,q}&=\mathcal{O}\left(t^{-(\frac{3}{2}-\frac{2}{q})}\right)
\quad \text{ for }t\to \infty.
\end{align*}
\end{theorem}

\section{Global Existence Theorem for Large Data}
\subsection{A Priori Estimates}
As written in the introduction one uses the decay of the solution to the classical Navier-Stokes equation together with \cite{meinpaper} to show a decay of the $H^s-$norms of the local solution to \eqref{dgl}.
\begin{lemma}\label{asmptotiknavierstokes}
Let $v_0\in W^{m,2}\cap L^2_\sigma\cap L^1$ and $m\geq 3$, then there is global solution to the two dimensional classical Navier-Stokes equation
\begin{equation}
 v\in C([0,\infty),W^{m,2}\cap L^2_\sigma)\cap C^1([0,\infty),W^{m-2,2}).
\end{equation}
Furthermore for $\alpha\in \N_0^2$ with $|\alpha|\leq m$ the following decay rates for a constant $c=c(\|v_0\|_{...})$ hold 
\begin{align}
 \|\nabla^\alpha v(t)\|_2&\leq c(1+t)^{-(\frac{|\alpha|}{2}+\frac{1}{2})},\\
 \|\nabla^\alpha v(t)\|_\infty&\leq c(1+t)^{-(\frac{|\alpha|}{2}+1)}.
\end{align}
Additional one has for $\alpha\in \N_0^2$ with $|\alpha|\leq m-2$ and a constant $c=c(\|v_0\|_{...})$ 
\begin{align}
\|\nabla^\alpha v_t(t)\|_2&\leq c(1+t)^{-(\frac{|\alpha|}{2}+\frac{3}{2})}.
\end{align} 
Especially it holds $v_0\in W^{m+3,2}\cap L^2_\sigma\cap L^1$ with $c_1=c_1(\|v_0\|_{...})$
\begin{equation}
	E_{m}(v(t)):=\frac{1}{2} \sum_{|\alpha|\leq m+1}(\tau \|\nabla^{\alpha}v_{t}\|^{2}_{2}+\mu\|\nabla^{\alpha}\nabla v\|^{2}_{2}+\|\nabla^{\alpha}v\|^{2}_{2})(t)\leq c_1 (1+t)^{-1}.
\end{equation}
\end{lemma}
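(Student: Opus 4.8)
The plan is to establish the three assertions separately, drawing on the classical theory of the two-dimensional Navier--Stokes equation for existence and for the sharp $L^2$-decay, and then to obtain the energy bound by a direct summation. For global existence and regularity I would invoke the standard $2$D theory: for $v_0\in W^{m,2}\cap L^2_\sigma$ with $m\geq 3$ the energy equality gives an a priori bound on $\|v\|_2$, the enstrophy estimate controls $\|\nabla v\|_2$ globally (the feature special to two dimensions that rules out blow-up), and higher $W^{m,2}$-regularity is then propagated by the usual energy method, yielding $v\in C([0,\infty),W^{m,2}\cap L^2_\sigma)$. The time regularity $v\in C^1([0,\infty),W^{m-2,2})$ follows by reading off $v_t=\mu\Delta v-\mathbb{P}(v\cdot\nabla)v$ from the equation, where $\mathbb{P}$ is the Leray projection and the right-hand side depends continuously on $t$ with values in $W^{m-2,2}$.

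For the decay rates I would work with the mild formulation $v(t)=e^{\mu t\Delta}v_0-\int_0^t e^{\mu(t-s)\Delta}\mathbb{P}(v\cdot\nabla)v(s)\,\mathrm d s$. The base estimate $\|v(t)\|_2\lesssim(1+t)^{-1/2}$ for $L^1\cap L^2$ data is obtained by Schonbek's Fourier-splitting method: one separates a shrinking frequency ball from its complement, bounds the low-frequency part by the heat-kernel rate $t^{-1/2}\|v_0\|_1$ (the divergence form of the nonlinearity giving at least this rate), and absorbs the remaining high frequencies into a differential inequality for $\|v(t)\|_2^2$. Starting from this, I would bootstrap the higher rates $\|\nabla^\alpha v(t)\|_2\lesssim(1+t)^{-(|\alpha|/2+1/2)}$ by induction on $|\alpha|$, using the smoothing estimate $\|\nabla^\alpha e^{\mu t\Delta}f\|_2\lesssim t^{-|\alpha|/2}\|f\|_2$ together with the lower-order decay already available inside the Duhamel integral (equivalently these are the Schonbek--Wiegner/Kajikiya--Miyakawa rates and may be cited directly). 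The $L^\infty$ bound then follows from the two-dimensional Agmon/Gagliardo--Nirenberg inequality $\|\nabla^\alpha v\|_\infty\lesssim\|\nabla^\alpha v\|_2^{1/2}\|\nabla^{\alpha+2}v\|_2^{1/2}$, which produces precisely the extra factor $(1+t)^{-1/2}$; and the rate for $v_t$ comes from $\|\nabla^\alpha v_t\|_2\leq\mu\|\nabla^{\alpha+2}v\|_2+\|\nabla^\alpha\mathbb{P}(v\cdot\nabla)v\|_2$, whose linear part already decays like $(1+t)^{-(|\alpha|/2+3/2)}$ while the quadratic part decays strictly faster.

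The final energy estimate is then purely arithmetic. Because the data are taken in $W^{m+3,2}$, the rates above hold for $\|\nabla^\alpha v\|_2$ up to $|\alpha|\leq m+3$ and for $\|\nabla^\alpha v_t\|_2$ up to $|\alpha|\leq m+1$, which is exactly enough to cover every summand of $E_m(v(t))$, where $|\alpha|\leq m+1$. Substituting the rates gives $\|\nabla^\alpha v\|_2^2\lesssim(1+t)^{-(|\alpha|+1)}$, $\|\nabla^{\alpha+1}v\|_2^2\lesssim(1+t)^{-(|\alpha|+2)}$ and $\tau\|\nabla^\alpha v_t\|_2^2\lesssim(1+t)^{-(|\alpha|+3)}$; the slowest term is the contribution $\|v\|_2^2\lesssim(1+t)^{-1}$ at $\alpha=0$, so the finite sum is dominated by $c_1(1+t)^{-1}$.

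I expect the main obstacle to be the base $L^2$-decay in the critical two-dimensional setting, where the exponent $\tfrac12$ is borderline for the Fourier-splitting argument: one must check that the divergence structure of $(v\cdot\nabla)v$ preserves the heat-kernel rate of the low-frequency part and that the Duhamel time-integral closes at the claimed exponent without picking up a logarithmic loss. Once the sharp $(1+t)^{-1/2}$ rate is secured, the remaining derivative, $L^\infty$, $v_t$ and energy estimates are routine consequences of interpolation, semigroup smoothing and summation.
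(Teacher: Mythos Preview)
Your proposal is correct and follows essentially the same route as the paper: the paper's own proof consists only of citing Kato's existence theorem and Schonbek's decay result, and what you have written is precisely a sketch of the arguments underlying those two references (standard $2$D global regularity plus Fourier splitting for the sharp $L^2$-decay, with the $L^\infty$, $v_t$ and energy bounds as immediate consequences). There is no substantive difference in approach.
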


\begin{proof}
Due to \cite[Theorem 1]{kato86}) there is a solution
\begin{equation}
 v\in C([0,\infty),W^{m,2}\cap L^2_\sigma)\cap C^1([0,\infty),W^{m-2,2}).
\end{equation}
Therefore the convergence rates follow from \cite[Theorem 3.2]{schonbek}.
\end{proof}

\noindent The other norms in the smallness condition of Theorem \ref{globalelösungdeltatauunabhängig} can be handled with the following lemma.
\begin{lemma}
 Let $\tau\leq 1$ and $v$ be the solution to
\begin{equation*}
  	\begin{aligned}
  	\tau &v_{tt}-\mu\Delta v+v_{t}=0\\
		&v(0,\cdot)=0,\quad v_{t}(0,\cdot)=v_{1}
	\end{aligned}
  \qquad
	\begin{aligned}
		\text{in } &(0,\infty)\times \R^{n},\\
		&\text{in } \R^{n},
	\end{aligned}
\end{equation*}
with initial data $v_1\in  W^{s,2}\cap L^\mathtt{p}$, where $1\leq \mathtt{p}   \leq 2$. Then for $\alpha \in \N^{n}_{0}$, $j\in \N_{0}$ with $0\leq |\alpha|+j\leq s$ and $j\neq s$ there exists $\tau-$independent constant $c$ such that
\begin{align}\label{abschätzungfürdennichtlinearenteil}
 \|\nabla^{\alpha}\partial^{j}_{t}v(t,\cdot)\|_{\mathtt{p}}\leq c \tau(\tau+t)^{-j-\frac{|\alpha|}{2} } (\|v_{1}\|_{\mathtt{p}}+\|v_{1}\|_{s,2}).
\end{align}
If additonally $v_1\in L^1$, then
\begin{align}\label{abschätzungfürdenlinearenteil}
 \|\nabla^{\alpha}\partial^{j}_{t}v(t,\cdot)\|_{\mathtt{p}}\leq c \tau(\tau+t)^{-j-\frac{|\alpha|}{2} -\frac{n}{2}(1-\frac{1}{\mathtt{p}})} (\|v_{1}\|_{1}+\|v_{1}\|_{s,2}).
\end{align}
 \end{lemma}
\begin{proof}
Use a transformation on the $\tau$-independent problem and the papers \cite{kono} and \cite{konoungerade}.
\end{proof}
\noindent Applying these decay rates to the representation formula for the solution \cite[Theorem 4.1]{global} one can show 
\begin{lemma}\label{asmptotikhyperbolischnavierstokeszusammengefasst}
 Let $\tau\leq 1$, $1\leq t$ and $s\in \N_0$. Furhtermore let $4<q<\infty$, $p:=\frac{q}{q-1}$, $0 < \varepsilon\leq \frac{1}{q}$ and $m_1\geq 2$. Then it holds the $\tau$-independent estimates
\begin{equation}
\label{ulespilonabschätzung}
\begin{aligned}
 \|u(t)\|_{m_1+6,p}+\|u(t)\|_{\frac{1}{1-\varepsilon}}\leq &c_2(\tau+t)^{-\varepsilon}(\|u_0\|_{m_1+8,2}+\|u_0\|_{1}+\tau\|u_1\|_{m_1+7,2}+\tau\|u_1\|_{1})\\&
 + c_2\int^{t}_{0}(\tau+t-r)^{-\frac{1}{2}}E_{m_1+7}(u(r))\dd r,\\
\end{aligned}
\end{equation}
\begin{equation}
\label{utlespilonabschätzung}
\begin{aligned}
\|u_t(t)\|_{m_1+5,p}+\|u_t(t)\|_{\frac{1}{1-\varepsilon}}\leq  &c_3(\tau+t)^{-(\varepsilon+1)}(\|u_0\|_{m_1+7,2}+\|u_0\|_{1}+\tau\|u_1\|_{m_1+7,2}+\tau\|u_1\|_{1}) \\&+ c_3\int^{t}_{0}(\tau+t-r)^{-\frac{3}{2}}E_{m_1+6}(u(r))\dd r.
\end{aligned}
\end{equation}
\end{lemma}

\subsection{The Theorem}
Now one can prove the following theorem for large inital data.
\begin{theorem} \label{globalelösunggrosseanfangsdaten}
Let $4<q<\infty$, $m_,m_1\in \N$ with $m_1\geq 2$, $m\geq m_1+7$ and $0<\varepsilon \leq \frac{1}{q}$. Then for $(u_{0},u_{1})\in \left(W^{m+7,2}(\R^{2})\cap L^{2}_{\sigma}(\R^{2})\cap L^{1}(\R^{2})\right)\times \left(W^{m+3,2}(\R^{2})\cap L^{2}_{\sigma}(\R^{2})\cap L^{1}(\R^{2})\right)$ and $\tau=\tau\bigl(\|u_0\|_{..},\|u_1\|_{..}\bigr)$ small enough, there exists a global solution $(u,p)$ to \eqref{dgl} with
\begin{align}
	u\in C^{0}([0,\infty)&,W^{m+4,2}\cap L^{2}_{\sigma})\cap C^{1}([0,\infty),W^{m+3,2})\cap C^{2}([0,\infty),W^{m+2,2}),\\
&\nabla (p+\tau p_{t})\in C^{0}([0,\infty),W^{m+2,2}).
\end{align}
Furhermore it holds 
\begin{align*}
	\|u(t)\|_{m,2}=\mathcal{O}\left(t^{-(\frac{1}{2}-\varepsilon)}\right),\quad \tau\|u_{t}(t)\|_{m,2}=\mathcal{O}\left(t^{-(1-\varepsilon)}\right),\quad
\|\nabla u(t)\|_{m,2}&=\mathcal{O}\left(t^{-(1-\varepsilon)}\right)\\
	\|u(t)\|_{m_1,q}=\mathcal{O}\left(t^{-(1-\frac{2}{q})}\right),\;\, \tau\|u_{t}(t)\|_{m_1,q}=\mathcal{O}\left(t^{-(\frac{3}{2}-\frac{2}{q})}\right),\;\,
\|\nabla u(t)\|_{m_1,q}&=\mathcal{O}\left(t^{-(\frac{3}{2}-\frac{2}{q})}\right)
\quad \text{ for }t\to \infty.
\end{align*}
\end{theorem}
\begin{beweisskizze}
 The proof consists of five steps:
\begin{enumerate}
 \item Impose conditions (inspired by Lemma \ref{asmptotiknavierstokes} and Lemma \ref{asmptotikhyperbolischnavierstokeszusammengefasst}) on a certain point in time $T^*$ (we want to apply the
global existence theorem for small data at $T^*$).
\item Impose conditions on $\tau$ (inspired by \cite[Theorem 3.2]{meinpaper}).
\item Show that the local solution can be continued until $T^*$. 
\item Prove with Lemma \ref{asmptotiknavierstokes} and Lemma \ref{asmptotikhyperbolischnavierstokeszusammengefasst} that the smallness condition is fullilled at $T^*$.
\item Apply the Global Existence Theorem for Small Data at $T^*$.
\end{enumerate}
\end{beweisskizze}
\begin{remark}
 Of course the prove extends to the three dimensional special cases, where the global strong solvability for large data of the classical Navier-Stokes equation is known. 
\end{remark}

\subsection{Condition on $\tau$}
The permitted size of the relaxation parameter $\tau$ in the global existence theorem \ref{globalelösunggrosseanfangsdaten} depends on the size of the initial data and therefore the general question of solution to large data remains open. On the other hand in applications one usually thinks of small relaxations and furthermore there are quite a few similar results.\\
For example in the theorems \cite[Theorem 2.2]{bnp}, \cite[Theorem 0.1]{pr} und \cite[Theorem 1]{hach}, where the equation
\begin{equation}\label{paicugleichung}
  	\begin{aligned}
   \tau u_{tt}+u_{t}+(u\cdot\nabla)u-\mu\Delta u+\nabla p&=0\\
\divergenz u&=0 \\
		u(0,\cdot)=u_{0},\quad &u_{t}(0,\cdot)=u_{1}
	\end{aligned}
 \qquad
	\begin{aligned}
		\text{in } &(0,T)\times \R^{n},\\
\text{in } &(0,T)\times \R^{n}, \\
		&\text{in } \R^{n},
	\end{aligned}
\end{equation} 
is studied, one has a similar constraint on $\tau$. In the paper \cite{hagstromlorenz} this is also the case.\\
Another comparison can be found in \cite{finitepropagation} where the equation 
\begin{equation}
  	\begin{aligned}
   \tau u_{tt}+u_{t}+(u\cdot\nabla)u-\mu\Delta u&-\tfrac{1}{\alpha}\nabla \divergenz u=0\\
		u(0,\cdot)=u_{0},\quad &u_{t}(0,\cdot)=u_{1}
	\end{aligned}
 \qquad
	\begin{aligned}
		\text{in } &(0,T)\times \R^{n},\\
		&\text{in } \R^{n},
	\end{aligned}
\end{equation} 
is studied. The global existence theorem \cite[Theorem 2]{finitepropagation} imposes analogue conditions on $\tau$ and $\alpha$.\\

\noindent In a forthcoming paper we will show that the same holds true for the minor changed model
\begin{equation}
  	\begin{aligned}
   \tau u_{tt}+u_{t}+(u\cdot\nabla)u+\divergenz u\, u-\mu\Delta u&-\tfrac{1}{\alpha}\nabla \divergenz u=0\\
		u(0,\cdot)=u_{0},\quad &u_{t}(0,\cdot)=u_{1}
	\end{aligned}
 \qquad
	\begin{aligned}
		\text{in } &(0,T)\times \R^{n},\\
		&\text{in } \R^{n},
	\end{aligned}
\end{equation} 
but this model also has a blow-up if the conditions on the parameters and the data are not satisfied. This shows that the question of global solutions to large data can be very delicate.\\

\noindent For the classical Navier-Stokes equation one has the very famous Beale-Kato-Majda criterion \cite{bealekatomajda} which states that under the condition 
\begin{equation}
 \operatorname{rot} u\in L^1((0,\infty),L^\infty)
\end{equation}
there is no blow-up in finite time. This criterion has been improved by \cite{kozonoogawataniuchi} to 
\begin{equation}
 \operatorname{rot} u\in L^1((0,\infty),\dot{B}^0_{\infty,\infty}),
\end{equation}
where $\dot{B}^0_{\infty,\infty}$ stands for the homogenous Besov space.\\
For the hyperbolic Navier-Stokes equation it is proven in \cite{fanozawa} that under the condition 
\begin{equation}
 u,\nabla u,u_t\in L^1((0,T),\dot{B}^0_{\infty,\infty}),
\end{equation}
there is no blow-up. Using the representation 
\begin{equation}
  	\begin{aligned}
   u_{t}(t)+P\left((u\cdot\nabla)u\right)(t)-\frac{\mu}{\tau}\int_0^t e^{-\tfrac{(t-s)}{\tau}}P\Delta u(s)\dd s&=e^{-\frac{t}{\tau}}(u_1+P(u_0\cdot \nabla)u_0) \qquad &\text{in } (0,T)\times \R^{n},\\
   		u(0,\cdot)&=u_{0} \qquad &\text{in } \R^{n}
	\end{aligned}
 \end{equation} 
for $u\in L^2_\sigma(\R^n)$ of \eqref{dgl}, one can improve this to the following theorem, but nevertheless even in $\R^2$ it seems to be not possible to show that the regularity criterion is satisfied, as one can easily do for the classical Navier-Stokes equation.
\begin{theorem}\label{satzregularitätskriterium}
 Let $s>m>\frac{n}{2}$ and $(u_{0},u_{1})\in \left(W^{s+2,2}(\R^{n})\cap L^{2}_{\sigma}(\R^{n})\right)\times \left(W^{s+1,2}(\R^{n})\cap L^{2}_{\sigma}(\R^{3})\right)$ and 
\begin{align*}u\in C^{0}([0,T),W^{s+2,2}(\R^{n})\cap L^{2}_{\sigma}(\R^{n}))\cap C^{1}([0,T),W^{s+1,2}(\R^{n}))\cap C^{2}([0,T),W^{s,2}(\R^{n}))
\end{align*}
the local solution due to \cite[Theorem 2.1]{meinpaper}. If
\begin{equation}
 \operatorname{rot} u\in L^1((0,T),\dot{B}^0_{\infty,\infty}),
\end{equation}
then one can continue the solution $u$ beyond $T$, which means there is a $T^*>T$, such that 
\begin{align*}u\in C^{0}([0,T^*),W^{s+2,2}(\R^{n})\cap L^{2}_{\sigma}(\R^{n}))\cap C^{1}([0,T^*),W^{s+1,2}(\R^{n}))\cap C^{2}([0,T^*),W^{s,2}(\R^{n})).
\end{align*}
\end{theorem}

\section{Global Relaxation Limit $\tau\to 0$}
\noindent If one changes the estimates (84), (88), (89) and (91) in the proof of \cite[Theorem 3.2]{meinpaper} and uses the $\tau$-independent decay rates one easily obtains the following result.
\begin{theorem}
Let $u$ be the $\tau-$independent global solution to \eqref{dgl} in one of the following cases:
\begin{enumerate}[(i)]
\item to large initial data and small parameter $\tau$ in $\R^2$ due to Theorem \ref{globalelösunggrosseanfangsdaten}, 
 \item to small initial data $\R^2$ or $\R^3$ due to Theorem \ref{globalelösungdeltatauunabhängig} (let additonally $u_0 \in W^{m+5,2}$ hold),
\item to small data as necessary for three dimensional classical Navier-Stokes (they are much weaker
than in (ii)) and small parameter $\tau$ due to Theorem \ref{globalelösunggrosseanfangsdaten}
\item to small 3d-perturbations of two dimensional initial data $\R^3$ due to Theorem \ref{globalelösunggrosseanfangsdaten},
\item to axially symmetric initial data or a small pertubations of that in $\R^3$ due to Theorem \ref{globalelösunggrosseanfangsdaten}.
\end{enumerate}
Then the following uniform estimates in $t$ hold 
\begin{equation}
\forall t\in[0,\infty):\qquad	\|u^{\tau}(t)-v(t)\|_{m+2,2}\leq c\tau \quad \text{ and } \quad	\|u^{\tau}_{t}(t)-v_{t}(t)\|_{m+1,2}\leq c\sqrt{\tau},
\end{equation}
where $v$ denotes the solution to the classical Navier-Stokes equation.
\end{theorem}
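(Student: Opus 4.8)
The plan is to compare the $\tau$-independent global solution $u=u^{\tau}$ of \eqref{dgl} with the classical Navier--Stokes solution $v$ sharing the same initial velocity $v(0,\cdot)=u_{0}$, and to run the damped-wave energy method from the proof of \cite[Theorem 3.2]{meinpaper} on the difference $w:=u^{\tau}-v$, but with every time integral closed by the $\tau$-independent decay rates instead of by the crude bound $\int_{0}^{T}\lesssim T\cdot\sup$ that makes the local constant grow. Applying the Leray projection $P$ to \eqref{dgl} (which annihilates $\nabla p+\tau\nabla p_{t}$) and to the classical equation, and writing $\tau u_{tt}=\tau w_{tt}+\tau v_{tt}$, I obtain for $w$ a damped wave equation
\begin{equation*}
 \tau w_{tt}+w_{t}-\mu\Delta w=F,
\end{equation*}
where, after inserting $u=v+w$ and $u_{t}=v_{t}+w_{t}$, the right-hand side $F$ splits into a genuine order-$\tau$ forcing $-\tau v_{tt}-\tau P[(v_{t}\cdot\nabla)v+(v\cdot\nabla)v_{t}]$, transport terms $-P[(w\cdot\nabla)u+(v\cdot\nabla)w]$ that are linear in $w$, and remainders such as $\tau(w_{t}\cdot\nabla)v$ and $\tau(w_{t}\cdot\nabla)w$ of higher order in the unknown. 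The point of the substitution is that the a priori bound $\tau\|u_{t}\|\sim\mathcal{O}(1)$ makes $\tau(u_{t}\cdot\nabla)u$ look like order one, whereas splitting off $v_{t}=\mathcal{O}(1)$ exposes the only genuinely $\mathcal{O}(\tau)$ part $\tau(v_{t}\cdot\nabla)v$ and relegates the rest to terms the energy can absorb.

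Next I set up the $\tau$-weighted high-order energy $\mathcal{E}(t)\sim\sum_{|\alpha|\le m+1}\bigl(\tau\|\nabla^{\alpha}w_{t}\|_{2}^{2}+\mu\|\nabla^{\alpha}\nabla w\|_{2}^{2}+\|\nabla^{\alpha}w\|_{2}^{2}\bigr)$, adapted to the $\sqrt{\tau}$/$\tau$ scaling, exactly as the energy \eqref{energie} but with a single power of $\tau$ on the $w_{t}$-term. Testing the equation against $\nabla^{\alpha}w_{t}$ and $\nabla^{\alpha}w$ following Klainerman--Ponce, using $\divergenz v=\divergenz w=0$ to cancel the top-order transport contributions and Kato--Ponce to control the commutators $[\nabla^{\alpha},u\cdot\nabla]w$ and $[\nabla^{\alpha},v\cdot\nabla]w$, I arrive at a differential inequality
\begin{equation*}
 \tfrac{\dd}{\dd t}\mathcal{E}+c\,\|w_{t}\|_{m+1,2}^{2}\le a(t)\,\mathcal{E}+\tau^{2}g(t),
\end{equation*}
in which $a(t)$ collects the Lipschitz-type coefficients $\|\nabla u\|_{\infty},\|\nabla v\|_{\infty}$ and the higher Sobolev norms of $u,v$, while $\tau^{2}g(t)$ comes from the genuine forcing after Young's inequality (e.g.\ $\tau\|v_{tt}\|\,\|w_{t}\|\le\frac14\|w_{t}\|^{2}+\tau^{2}\|v_{tt}\|^{2}$).

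The decisive modification of estimates (84), (88), (89) and (91) of \cite{meinpaper} is then to bound $\int_{0}^{t}a$ and $\int_{0}^{t}g$ uniformly in $t$ and $\tau$. For this I feed in the decay rates of Lemma \ref{asmptotiknavierstokes} for $v$ and of Theorem \ref{globalel�sunggrosseanfangsdaten} (resp.\ Theorem \ref{globalel�sungdeltatauunabh�ngig} in case (ii)) for $u$: the gradient norms decay like $\|\nabla v\|_{\infty}\sim(1+t)^{-3/2}$ and $\|\nabla u\|_{\infty}\lesssim\|\nabla u\|_{m_1,q}\sim(1+t)^{-(3/2-2/q)}$, both with exponent $>1$, so $\int_{0}^{\infty}a<\infty$; and the forcing integrand is a sum of squares of decaying $v$-quantities such as $\|v_{tt}\|_{m+1,2}^{2}$ and $\|(v_{t}\cdot\nabla)v\|_{m+1,2}^{2}$ (the required regularity of $v$ being supplied by the high-Sobolev data), so $\int_{0}^{\infty}g<\infty$. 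A Gronwall argument then yields $\mathcal{E}(t)\le\bigl(\mathcal{E}(0)+\tau^{2}\!\int_{0}^{\infty}g\bigr)e^{\int_{0}^{\infty}a}\le c\tau^{2}$ for all $t$, where $\mathcal{E}(0)\le c\tau^{2}$ is inherited from the well-prepared initial data of \cite[Theorem 3.2]{meinpaper} (with $v(0)=u_{0}$, so $w(0)\equiv0$ and $\tau\|w_{t}(0)\|^{2}=\tau\|u_{1}-v_{t}(0)\|^{2}\le c\tau^{2}$). Reading off the two components of $\mathcal{E}$ gives $\|w(t)\|_{m+2,2}\le c\tau$ and $\|w_{t}(t)\|_{m+1,2}\le c\sqrt{\tau}$ uniformly in $t$ and $\tau$; since the argument uses only the existence of a $\tau$-independent global $u$ with the stated decay and of a globally decaying classical $v$, it covers all five cases (i)--(v) at once.

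The main obstacle I expect is the integrability bookkeeping hidden in ``one easily obtains'': the undifferentiated velocity norms $\|u\|_{\infty},\|v\|_{\infty}\sim(1+t)^{-1}$ and $\|u\|_{m,2}\sim(1+t)^{-(\frac12-\varepsilon)}$ are \emph{not} integrable on their own, so one must verify that in each of the four replaced estimates every such factor is either removed by the divergence-free cancellation or multiplied by a gradient-$L^{\infty}$ factor carrying the extra decay needed to push the net exponent past $1$ (and, for the squared forcings, past $\tfrac12$ before multiplication by $\tau^{2}$). Checking that the choices $4<q<\infty$ and $0<\varepsilon\le\frac1q$ make all these exponents clear their thresholds --- and that $m\ge m_{1}+7$ supplies enough derivatives for the embedding $W^{m_1,q}\hookrightarrow L^{\infty}$ and the product estimates in $W^{m+1,2}$ --- is the delicate part of the argument.
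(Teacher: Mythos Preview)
Your proposal is correct and follows exactly the route the paper indicates: the paper's entire proof is the sentence preceding the theorem, namely to rerun the energy argument of \cite[Theorem~3.2]{meinpaper} on the difference $w=u^{\tau}-v$, replacing the four finite-time estimates (84), (88), (89), (91) by versions closed via the $\tau$-independent decay rates of Theorems~\ref{globalel�sungdeltatauunabh�ngig} and~\ref{globalel�sunggrosseanfangsdaten} and Lemma~\ref{asmptotiknavierstokes}, so that $\int_0^\infty a<\infty$ and $\int_0^\infty g<\infty$ and Gronwall gives a bound uniform in $t$. Your sketch in fact supplies more detail than the paper itself, and the obstacle you flag---that only the gradient norms decay fast enough and one must check that every surviving coefficient in the four modified estimates carries at least one such factor---is precisely the content hidden behind ``one easily obtains''.
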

\begin{remark}
 In contrast to the papers \cite{pr}, \cite{hach} and \cite{bnp} this is a uniform estimate in $t$. 
\end{remark}

\bibliographystyle{alpha}

\bibliography{literatur}

\begin{thebibliography}{BKM84}

\bibitem[BKM84]{bealekatomajda}
Thomas Beale, Tosio Kato, and Andrew Majda.
\newblock Remarks on the breakdown of smooth solutions for the 3-d euler
  equations.
\newblock {\em Communications in Mathematical Physics}, 94(1):61--66, 1984.

\bibitem[BNP03]{bnp}
Yann Brenier, Roberto Natalini, and Marjolaine Puel.
\newblock On a relaxation approximation of the incompressible navier-stokes
  equations.
\newblock {\em Proceedings of the American Mathematical Society},
  132(4):1021--1028, 2003.

\bibitem[DQR09]{dqr}
Michael Dreher, Ram{\'o}n Quintanilla, and Reinhard Racke.
\newblock Ill-posed problems in thermomechanics.
\newblock {\em Applied Mathematics Letters}, 22(9):1374--1379, 2009.

\bibitem[FO12]{fanozawa}
Jishan Fan and Tohru Ozawa.
\newblock Regularity criteria for hyperbolic navier-stokes and related system.
\newblock {\em ISRN Mathematical Analysis}, 2012, 2012.

\bibitem[Hac12]{hach}
Im{\`e}ne Hachicha.
\newblock Global existence for a damped wave equation and convergence towards a
  solution of the navier-stokes problem.
\newblock {\em arXiv preprint arXiv:1205.5166v2 [math.AP]}, 2012.

\bibitem[Hac13]{finitepropagation}
Im{\`e}ne Hachicha.
\newblock A finite speed of propagation approximation for the incompressible
  navier-stokes equations.
\newblock {\em arXiv preprint arXiv:1308.0542v1 [math.AP]}, 2013.

\bibitem[HL02]{hagstromlorenz}
Thomas Hagstrom and Jens Lorenz.
\newblock On the stability of approximate solutions of hyperbolic-parabolic
  systems and the all-time existence of smooth, slightly compressible flows.
\newblock {\em Indiana University mathematics journal}, 51(6):1339--1387, 2002.

\bibitem[Kat86]{kato86}
Tosio Kato.
\newblock Remarks on the euler and navier-stokes equations in {$\R^2$}.
\newblock In {\em Nonlinear Functional Analysis and Its Application}, volume~45
  of {\em Proceedings of Symposia in Pure Mathematics}, pages 1--7, 1986.

\bibitem[KOT02]{kozonoogawataniuchi}
Hideo Kozono, Takayoshi Ogawa, and Yasushi Taniuchi.
\newblock The critical sobolev inequalities in besov spaces and regularity
  criterion to some semi-linear evolution equations.
\newblock {\em Mathematische Zeitschrift}, 242(2):251--278, 2002.

\bibitem[Mat76]{matsumura}
Akitaka Matsumura.
\newblock On the asymptotic behavior of solutions of semi-linear wave
  equations.
\newblock {\em Publications of the Research Institute for Mathematical
  Sciences}, 12(1):169--189, 1976.

\bibitem[Ono04]{kono}
Kosuke Ono.
\newblock $l^{p}$ decay problem for the dissipative wave equation in even
  dimensions.
\newblock {\em Mathematical Methods in the Applied Sciences},
  27(16):1843--1863, 2004.

\bibitem[Ono05]{konoungerade}
Kosuke Ono.
\newblock $l^p$ decay problem for the dissipative wave equation in odd
  dimensions.
\newblock {\em Journal of mathematical analysis and applications},
  310(2):347--361, 2005.

\bibitem[PR07]{pr}
Marius Paicu and Genevi{\`e}ve Raugel.
\newblock Une perturbation hyperbolique des {\'e}quations de navier-stokes.
\newblock In {\em ESAIM: Proceedings}, volume~21, pages 65--87. EDP Sciences,
  2007.

\bibitem[Rac92]{rackebuch}
R.~Racke.
\newblock {\em Lectures on Nonlinear Evolution Equations. Initial Value
  Problems}.
\newblock Aspects of Mathematics. Vieweg, Braunschweig, 1992.

\bibitem[RS12a]{lokal}
Reinhard Racke and J{\"u}rgen Saal.
\newblock Hyperbolic navier-stokes equations i: Local well-posedness.
\newblock {\em Evolution Equations and Control Theory}, 1(1):195--215, 2012.

\bibitem[RS12b]{global}
Reinhard Racke and J{\"u}rgen Saal.
\newblock Hyperbolic navier-stokes equations ii: global existence of small
  solutions.
\newblock {\em Evolution Equations and Control Theory (accepted)}, 2012.

\bibitem[Sch95]{schonbek}
Maria~E Schonbek.
\newblock Large time behaviour of solutions to the navier-stokes equations in
  $h^m$ spaces.
\newblock {\em Communications in Partial Differential Equations},
  20(1-2):103--117, 1995.

\bibitem[Sch12]{meinpaper}
Alexander Sch{\"o}we.
\newblock A quasilinear delayed hyperbolic navier-stokes system: global
  solution, asymptotics and relaxation limit.
\newblock {\em Methods and Applications of Analysis}, 19(2):99--118, 2012.

\bibitem[SR09]{fsr}
Hugo~Fern{\'a}ndez Sare and Reinhard Racke.
\newblock On the stability of damped timoshenko systems: Cattaneo versus
  fourier law.
\newblock {\em Archive for Rational Mechanics and Analysis}, 194(1):221--251,
  2009.

\end{thebibliography}

\textsc{University of Konstanz, Department of Mathematics and Statistics, 78464 Konstanz, Germany}
\end{document}